\documentclass[12pt, leqno]{amsart}

\usepackage{float}
\usepackage{graphicx}
\usepackage[margin=1in]{geometry}

\theoremstyle{theorem}
\newtheorem{theorem}{Theorem}
\newtheorem{corollary}{Corollary}
\newtheorem*{remark}{Remark}
\newtheorem*{acknowledgment}{Acknowledgments}
\def\Z{\Bbb{Z}}
\def\R{\mathbb{R}}

\def\C{\mathbb{C}}
\def\sinc{{\rm sinc}}
\def\Si{{\rm Si}}
\def\rect{{\rm rect}}

\def \sech{{\rm sech}}
\begin{document}

\title{The Binomial Coefficient as an (In)finite Sum of Sinc Functions}
\markright{Binomial  coefficient via  damped sine waves}
\author{Lorenzo David}

\maketitle
\vspace{-11truemm}
\begin{abstract}
In this article, we give a formula for the generalization of the binomial coefficient to the complex numbers as a linear combination of $\sinc$ functions. We then give a general formula to compute the integral on the real line of the product of the binomial coefficient and a given function, which, in some cases, turns out to be equal to the series of their values on the integers. Finally, we establish a list of identities obtained by applying these formulas.
\end{abstract}
\vspace{-5truemm}

\section{Introduction.}

The binomial coefficient  $\binom m k=\frac{m!}{k!(m-k)!}$ is the number of ways of picking $k$ unordered outcomes from $m$ possibilities, also known as a combination. For any complex number $w$ and integer $k$, the definition is extended as follows:
\begin{equation}\label{BinomDef}
 \binom w k =   \begin{cases}
                                  \displaystyle\frac{w(w-1)\cdots(w-k+1)}{k!}, & \text{if $k\geq 1$}; \\
                                    1, & \text{if $k=0$};\\
                                    0, & \text{otherwise}. \\
  \end{cases}
\end{equation}  
Moreover, the gamma function allows the binomial coefficient to be generalized to two complex arguments as
 \begin{equation}\label{GammaDef}  \binom w z  = \frac {\Gamma(w+1)}{\Gamma(z+1) \Gamma(w-z+1)}, \quad w \not\in\Z_{\le-1}.
\end{equation}
For $\mathfrak{R}(w)>-1$, the graph of the real and of the imaginary part of $\binom w z$  resembles a damped sinusoid; see Figure \ref{Figure1}. This suggests a possible relation to $\sinc(z)$, which we will prove as one of our main results.
\begin{figure}[H]
\centering{\includegraphics[scale=0.45]{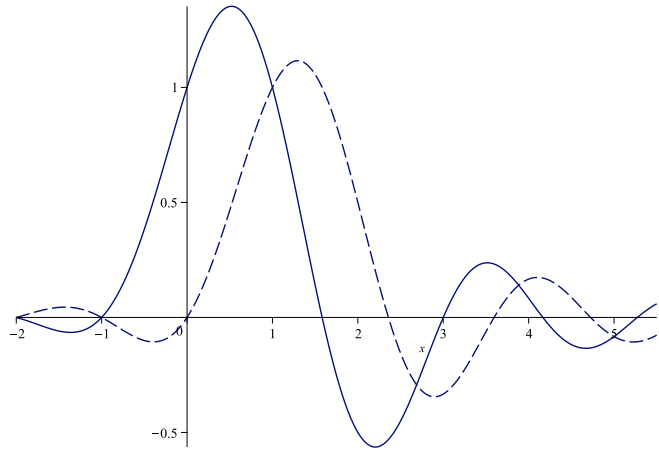}}
\caption{A plot of the real part (thick line) and imaginary part (dashed line) of $\binom {1+i}{x}$.}
\label{Figure1}
\end{figure}
\vskip 1 cm 

\section{Preliminaries.}
We define the reciprocal gamma function, for any complex number $z$, by the Weierstrass form (see, for instance, {\cite[p. 57]{Hav}}):
\begin{equation}\label{RecGammaDef}
\frac{1}{\Gamma(z)}=ze^{\gamma z} \prod_{k=1}^{\infty}\left(1+\frac{z}{k}\right)e^{-z/k},
\end{equation}
where $\gamma$ is the Euler--Mascheroni constant.

From this definition one can deduce Euler's reflection formula (a detailed proof is given in {\cite[pp. 58--59]{Hav}}):
\begin{equation}\label{EulRef}
\frac{1}{\Gamma(z)\Gamma(1-z)}= \frac{\sin(\pi z)}{\pi}, \thickspace \thickspace \thickspace z \in \C.
\end{equation}
We will also make use of the generalized binomial theorem (for a proof see {\cite{Abel}}):
\begin{equation}\label{BinTheor}\sum_{k=0}^\infty \binom w k z^k=  
 \begin{cases}
                                  (1+z)^w, & \text{if $|z|<1$ and $w\in\C$}; \\
                                    & \text{or $|z|>1$ and $w\in\Z_{\geq 0}$}; \\
                                    & \text{or $|z|=1$, $z\neq -1$ and $\mathfrak{R}(w)>-1$}; \\
                                            0, &\text{if $z=-1$ and $\mathfrak{R}(w)>0$}. \\
  \end{cases}
\end{equation}
Another related function we will use in this article is the beta function:
\begin{equation} \label{BetaFunc} B(p,q)=\sum_{k=0}^{\infty}\binom{p-1}{k}\frac{(-1)^k}{q+k},\thickspace \thickspace \thickspace \mathfrak{R}(p)>0,\ q\in\C, \ q\notin \Z_{\leq0}.
\end{equation}
A straightforward proof that this definition is equivalent to the more familiar integral definition $ B(p,q)= \int_{0}^{1} t^{q-1}(1-t)^{p-1}dt$ can be given by expanding $(1-t)^{p-1}$ with (\ref{BinTheor}) and integrating. Moreover, since the beta function is holomorphic, we can extend this definition to the domain of analyticity of the series. (For more details on analytic continuation see {\cite[p. 234]{Fla}}.)

The beta function can also be defined from the gamma function by
\begin{equation}\label{BetaFunc2} 
B(p,q)= \frac{\Gamma(p)\Gamma(q)}{\Gamma(p+q)},\thickspace \thickspace \thickspace p,q \in \C,\thickspace p,q \notin \Z_{\leq 0}.
\end{equation}
For $f\in L^1(\R)+L^2(\R)$, we define the Fourier transform as \\
\begin{equation}\label{1FourierDef}
\widehat f(\xi)=\int_{-\infty}^{\infty}f(x) e^{-2\pi i x\xi}\,dx.
\end{equation}
If $f \in L^2(\R)$, the expression above is to be understood as an improper integral. 
Furthermore, we recall that a function having a Fourier transform with bounded support $[-b,b]$, is said bandlimited, with bandwidth $b$.

We will also make use of the function
\begin{equation}\label{SincDef}
\sinc(z):= \begin{cases}
                                  1, & \text{if $z=0$}; \\
                                           \displaystyle \frac{ \sin(\pi z)}{\pi z}, &\text{otherwise} \\
  \end{cases}
\end{equation}
and of its unnormalized integral
\begin{equation}\label{SiDef}
\Si(z):=\int_{0}^{z} \frac{ \sin t}{t}\,dt.
\end{equation}
As can be seen from the Fourier inversion theorem, the Fourier transform of $\sinc(x)$ is given by
\begin{equation}\label{RectDef}
\rect(x)=
  \begin{cases}
                                  1, & \text{if $|x|<1/2$}; \\
                                    0, & \text{otherwise}. \\
  \end{cases}
\end{equation}

We will use an important result regarding the $\sinc$ function, which, for $f$ in $L^1(\R)$, directly follows from the Fourier inversion theorem and from the fact that $\rect(x)\in L^1(\R)$, whereas, in $L^2(\R)$, it is a consequence of Plancherel theorem, namely
\begin{equation}\label{Plancherel} \int_{-\infty}^{\infty}\! f(x)\, \sinc(x-a)\, dx = \int_{-\frac{1}{2}}^{\frac{1}{2}} \widehat  f(\xi) e^{2\pi i \xi a}\,d\xi, \thickspace \thickspace \thickspace a \in \R.
\end{equation}

\section{Primary Results.}

\begin{theorem}\label{Theorem 1} For every nonnegative integer  $m$, the binomial coefficient $\binom{m}{z}$, where $z$ is a complex number, is equal to the following finite sum:
 \begin{equation}  \binom m z=\sum_{k=0}^m \binom m k  \sinc(z-k).
 \end{equation}
\end{theorem}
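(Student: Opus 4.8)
The plan is to strip off the trigonometric factor common to both sides and reduce the claim to a classical partial-fraction decomposition. For $z\notin\Z$ one has $\sin(\pi(z-k))=(-1)^k\sin(\pi z)$, so that $\sinc(z-k)=(-1)^k\sin(\pi z)/\big(\pi(z-k)\big)$ by (\ref{SincDef}); hence the right-hand side of the asserted identity equals $\frac{\sin(\pi z)}{\pi}\sum_{k=0}^{m}(-1)^k\binom mk\frac{1}{z-k}$. I would then put the left-hand side in the same shape. Writing $\Gamma(m-z+1)=(m-z)(m-1-z)\cdots(1-z)\,\Gamma(1-z)$ in (\ref{GammaDef}) and applying Euler's reflection formula (\ref{EulRef}) to $\Gamma(z+1)\Gamma(1-z)=z\,\Gamma(z)\Gamma(1-z)=\pi z/\sin(\pi z)$ gives
\[
\binom mz=\frac{m!\,\sin(\pi z)}{\pi\,z\prod_{j=1}^{m}(j-z)}=\frac{(-1)^{m}m!\,\sin(\pi z)}{\pi\prod_{j=0}^{m}(z-j)}.
\]

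With both sides in this form, the theorem for non-integer $z$ is equivalent to the rational identity
\[
\frac{(-1)^{m}m!}{\prod_{j=0}^{m}(z-j)}=\sum_{k=0}^{m}\frac{(-1)^{k}\binom mk}{z-k},
\]
which is nothing but the partial-fraction expansion of the left-hand side: that rational function has only simple poles, located at $z=0,1,\dots,m$, and it vanishes at infinity, so it coincides with the sum of its principal parts. The residue at $z=k$ is $(-1)^{m}m!/\prod_{j\neq k}(k-j)$, and since $\prod_{j\neq k}(k-j)=k!\,(-1)^{m-k}(m-k)!$, this residue is exactly $(-1)^{k}\binom mk$. Multiplying the rational identity through by $\sin(\pi z)/\pi$ and reusing $\sin(\pi z)/\big(\pi(z-k)\big)=(-1)^{k}\sinc(z-k)$ then produces the claimed formula for every $z\notin\Z$.

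It remains to treat integer $z$, which is immediate. If $z=n$ with $0\le n\le m$ then $\sinc(n-k)=0$ for $k\neq n$ and $\sinc(0)=1$, so the sum collapses to $\binom mn$; if $n\in\Z$ lies outside $\{0,\dots,m\}$ then every $\sinc(n-k)$ vanishes and $\binom mn=0$ by (\ref{BinomDef}) (consistently, a gamma factor in the denominator of (\ref{GammaDef}) then has a pole). Alternatively, since $1/\Gamma$ is entire, both sides are entire functions of $z$ agreeing off $\Z$, hence everywhere. I do not expect a genuine obstacle in this argument: the only thing demanding care is the sign bookkeeping in the reflection-formula rewriting and in the residue computation. (One could instead observe that $z\mapsto\binom mz$ is an $L^{2}(\R)$ function of exponential type $\pi$, hence bandlimited to $[-1/2,1/2]$ by the Paley--Wiener theorem, and then invoke the sampling theorem together with (\ref{Plancherel}) and (\ref{RectDef}); but the partial-fraction route is shorter and uses only what has already been set up.)
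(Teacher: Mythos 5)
Your proof is correct and follows essentially the same route as the paper: reduce $\binom{m}{z}$ via Euler's reflection formula to $\frac{(-1)^m m!\sin(\pi z)}{\pi\prod_{j=0}^m(z-j)}$, apply partial fractions, recombine the sign $(-1)^k$ with $\sin(\pi z)$ into $\sinc(z-k)$, and handle integer $z$ separately. Your explicit residue computation fills in the step the paper leaves as ``it can be shown,'' but the argument is otherwise identical.
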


 \begin{theorem}\label{Theorem 2} Let $w$ be a complex number with real part greater than $-1$, and let $z$ be a complex number. Then the binomial coefficient  $\binom{w}{z}$ is equal to the following series of functions:
 \begin{equation}  \binom w z=\sum_{k=0}^\infty \binom w k  \sinc(z-k).
 \end{equation}
Moreover, the convergence is absolute.
\end{theorem}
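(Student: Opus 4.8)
The plan is to strip off the trivial cases and then, for non-integral $z$, reduce the claimed identity to the beta-function series (\ref{BetaFunc}), which Euler's reflection formula (\ref{EulRef}) turns back into the gamma-function expression (\ref{GammaDef}) for $\binom{w}{z}$. First, the easy cases: if $z=n\in\Z_{\geq0}$ then $\sinc(z-k)=0$ for every $k\neq n$ while $\sinc(0)=1$, so the series reduces to the single term $\binom{w}{n}=\binom{w}{z}$ and converges trivially; and if $w\in\Z_{\geq0}$, the statement is exactly Theorem \ref{Theorem 1}, a finite sum. Hence from now on I assume $w\notin\Z_{\geq0}$ --- equivalently, since $\mathfrak{R}(w)>-1$, that $w\notin\Z$ --- and $z\notin\Z_{\geq0}$.

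For the absolute convergence I would use (\ref{EulRef}) in the form $1/\Gamma(w-k+1)=(-1)^{k+1}\sin(\pi w)\,\Gamma(k-w)/\pi$, which gives
\[
\binom{w}{k}=\frac{(-1)^{k+1}\sin(\pi w)\,\Gamma(w+1)}{\pi}\cdot\frac{\Gamma(k-w)}{\Gamma(k+1)}.
\]
Stirling's formula yields $\Gamma(k-w)/\Gamma(k+1)\sim k^{-w-1}$, hence $\binom{w}{k}=O(k^{-\mathfrak{R}(w)-1})$; together with $|\sinc(z-k)|=|\sin(\pi z)|/(\pi|z-k|)=O(k^{-1})$ this makes the general term of the series $O(k^{-\mathfrak{R}(w)-2})$, and $\mathfrak{R}(w)+2>1$ gives absolute convergence. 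This also licenses the term-by-term manipulations below.

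Now the identity. Fix $z\notin\Z_{\geq0}$. From $\sin(\pi(z-k))=(-1)^k\sin(\pi z)$ we get $\sinc(z-k)=(-1)^k\sin(\pi z)/(\pi(z-k))$ for all $k\geq0$, so, extracting the common factor and writing $1/(z-k)=-1/((-z)+k)$,
\begin{align*}
\sum_{k=0}^{\infty}\binom{w}{k}\sinc(z-k)
&=\frac{\sin(\pi z)}{\pi}\sum_{k=0}^{\infty}\binom{w}{k}\frac{(-1)^k}{z-k}\\
&=-\frac{\sin(\pi z)}{\pi}\sum_{k=0}^{\infty}\binom{w}{k}\frac{(-1)^k}{(-z)+k}
=-\frac{\sin(\pi z)}{\pi}\,B(w+1,-z)
\end{align*}
by (\ref{BetaFunc}) with $p=w+1$ (so $\mathfrak{R}(p)>0$) and $q=-z\notin\Z_{\leq0}$. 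If $z\in\Z_{<0}$ then $\sin(\pi z)=0$, so this expression equals $0=\binom{w}{z}$ and we are done. If instead $z\notin\Z$, then (\ref{BetaFunc2}) gives $B(w+1,-z)=\Gamma(w+1)\Gamma(-z)/\Gamma(w-z+1)$, while (\ref{EulRef}) evaluated at $-z$ gives $\Gamma(-z)\Gamma(z+1)=-\pi/\sin(\pi z)$, that is, $-\tfrac{\sin(\pi z)}{\pi}\Gamma(-z)=1/\Gamma(z+1)$; substituting into the previous display,
\[
\sum_{k=0}^{\infty}\binom{w}{k}\sinc(z-k)=-\frac{\sin(\pi z)}{\pi}\,B(w+1,-z)=\frac{\Gamma(w+1)}{\Gamma(z+1)\,\Gamma(w-z+1)}=\binom{w}{z}
\]
by (\ref{GammaDef}), which applies since $\mathfrak{R}(w)>-1$. (In the borderline sub-case $w-z+1\in\Z_{\leq0}$, the pole of $\Gamma(w-z+1)$ makes both sides vanish, consistently with (\ref{BetaFunc2}) and (\ref{GammaDef}).)

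I expect the only mildly technical ingredient to be the asymptotic estimate for $\binom{w}{k}$, and even that is routine once (\ref{EulRef}) exposes the quotient $\Gamma(k-w)/\Gamma(k+1)$; the substance of the proof --- spotting the beta series hidden in the $\sinc$-expansion and collapsing it by reflection --- is the short chain of equalities above, with the remaining cases merely forcing both sides to $0$ or the series to a single term.
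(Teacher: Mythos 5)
Your proof is correct, but it takes a genuinely different route from the paper's. The paper computes the Fourier transform of the series term by term, obtaining $(1+e^{-2\pi i\xi})^{w}\rect(\xi)$ via the generalized binomial theorem (\ref{BinTheor}), matches this against Ramanujan's evaluation of $\int_{-\infty}^{\infty}\binom{w}{x}e^{-2\pi i\xi x}\,dx$, invokes Fourier inversion to get the identity for $\mathfrak{R}(w)>0$ or $w=0$, and then extends to $\mathfrak{R}(w)>-1$ by analytic continuation in $w$; absolute convergence is handled there by Raabe's test. You instead factor $\sinc(z-k)=(-1)^k\sin(\pi z)/(\pi(z-k))$, recognize the resulting series as $-\frac{\sin(\pi z)}{\pi}B(w+1,-z)$ via the beta series (\ref{BetaFunc}), and collapse it with (\ref{BetaFunc2}) and Euler reflection --- exactly the infinite-sum analogue of the paper's partial-fraction proof of Theorem \ref{Theorem 1}, and in effect the reverse of the paper's later derivation of (\ref{Application2}) \emph{from} Theorem \ref{Theorem 2} (no circularity, since you only use the preliminaries). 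Your route is more elementary --- no Fourier analysis, no Ramanujan integral --- and reaches the whole half-plane $\mathfrak{R}(w)>-1$ in one stroke, at the price of leaning on the paper's asserted identification of the series (\ref{BetaFunc}) with $\Gamma(p)\Gamma(q)/\Gamma(p+q)$ for $\mathfrak{R}(p)>0$, $q\notin\Z_{\leq 0}$, an analytic continuation in $q$ the paper only sketches. What the paper's route buys is the explicit transform (\ref{FourierT}), i.e., the bandlimitedness of $\binom{w}{x}$ that underlies Theorem \ref{Theorem 3} and the Corollary. Your Stirling estimate $\binom{w}{k}=O(k^{-\mathfrak{R}(w)-1})$ is a clean substitute for the Raabe computation, and the case analysis ($z\in\Z_{\geq 0}$, $z\in\Z_{<0}$, $w-z+1\in\Z_{\leq 0}$) is careful and complete.
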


\begin{theorem}\label{Theorem 3} If $f:\R\rightarrow \C$ is in either $L^1(\R)$ or $L^2(\R)$, then
\begin{equation} \int_{-\infty}^{\infty}\! \binom w x f(x) \,dx=\sum_{k=0}^{\infty}\binom w k \displaystyle \int_{-\frac{1}{2}}^{\frac{1}{2}}\! \widehat f(\xi) e^{2\pi i\xi k}\,d\xi  ,\thickspace \thickspace\thickspace\mathfrak{R}(w)>-1.
\end{equation}
\end{theorem}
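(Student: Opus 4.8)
The plan is to feed Theorem~\ref{Theorem 2} into the $\sinc$--Fourier identity~(\ref{Plancherel}). For $N\in\Z_{\ge0}$ set $S_N(x)=\sum_{k=0}^{N}\binom{w}{k}\sinc(x-k)$; Theorem~\ref{Theorem 2} gives $S_N(x)\to\binom{w}{x}$ pointwise on $\R$ (with absolute convergence of the series). Multiplying by $f$, integrating, and applying~(\ref{Plancherel}) with $a=k$ to each of the $N+1$ terms yields, for every $N$,
\[
\int_{-\infty}^{\infty}S_N(x)\,f(x)\,dx=\sum_{k=0}^{N}\binom{w}{k}\int_{-\frac12}^{\frac12}\widehat f(\xi)\,e^{2\pi i\xi k}\,d\xi .
\]
So the theorem reduces to showing that, as $N\to\infty$, the left side converges to $\int_{-\infty}^{\infty}\binom{w}{x}f(x)\,dx$ while the right side converges to the asserted series; convergence of the left side is the heart of the matter.

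For $\mathfrak{R}(w)>0$ this is immediate: from the Weierstrass product~(\ref{RecGammaDef}) (equivalently, the standard limit for ratios of gamma values) one gets $\binom{w}{k}\sim(-1)^k/\bigl(\Gamma(-w)\,k^{w+1}\bigr)$, so $\bigl|\binom{w}{k}\bigr|=O(k^{-\mathfrak{R}(w)-1})$ is summable; since $|\sinc|\le1$, the interchange of sum and integral is then justified by Fubini (for $f\in L^1$), or by Cauchy--Schwarz using that the numbers $\int\sinc(x-k)f(x)\,dx$ are the Fourier coefficients of $\widehat f\,\rect\in L^2$, hence lie in $\ell^2$ (for $f\in L^2$). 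The delicate range is $-1<\mathfrak{R}(w)\le0$, where $\sum_k\bigl|\binom{w}{k}\bigr|$ diverges and no crude estimate dominates the partial sums $S_N$ appropriately; overcoming this is where I expect the main difficulty to lie.

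I would handle it by passing to the Fourier side. Transforming Theorem~\ref{Theorem 2} term by term --- the Fourier transform of $\sinc(\,\cdot-k)$ being $e^{-2\pi i\xi k}\rect(\xi)$ by~(\ref{RectDef}) --- and summing the resulting series by the generalized binomial theorem~(\ref{BinTheor}) at $z=e^{-2\pi i\xi}$ (legitimate on $|\xi|<\tfrac12$, since there $|z|=1$, $z\neq-1$, and $\mathfrak{R}(w)>-1$) identifies
\[
\widehat{\binom{w}{\cdot}}(\xi)=\bigl(1+e^{-2\pi i\xi}\bigr)^{w}\rect(\xi)=\bigl(2\cos\pi\xi\bigr)^{w}e^{-\pi i w\xi}\,\rect(\xi).
\]
Thus $x\mapsto\binom{w}{x}$ is bandlimited of bandwidth $\tfrac12$, and $\bigl(2\cos\pi\xi\bigr)^{\mathfrak{R}(w)}=O\bigl((\tfrac12-|\xi|)^{\mathfrak{R}(w)}\bigr)$ shows this transform lies in $L^{1}(-\tfrac12,\tfrac12)$ for every $\mathfrak{R}(w)>-1$ (and in $L^{2}$ precisely when $\mathfrak{R}(w)>-\tfrac12$, which is exactly the range in which the left-hand integral of the theorem is absolutely convergent for all $f\in L^2$). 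Parseval's relation then rewrites $\int_{-\infty}^{\infty}\binom{w}{x}f(x)\,dx$ as $\int_{-1/2}^{1/2}\bigl(1+e^{2\pi i\xi}\bigr)^{w}\widehat f(\xi)\,d\xi$; expanding $\bigl(1+e^{2\pi i\xi}\bigr)^{w}=\sum_{k\ge0}\binom{w}{k}e^{2\pi i k\xi}$ by~(\ref{BinTheor}) once more, everything reduces to interchanging a sum and an integral over the \emph{bounded} interval $[-\tfrac12,\tfrac12]$. That interchange is now the only delicate point, and it can be carried out using $L^{p}$-convergence of Fourier partial sums (a theorem of M.~Riesz) for a suitable $p>1$ when $f\in L^1$ (so that $\widehat f\in L^\infty$), and ordinary $L^{2}$ convergence of Fourier series when $f\in L^2$ (where we are in the range $\mathfrak{R}(w)>-\tfrac12$). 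This delivers $\sum_{k\ge0}\binom{w}{k}\int_{-1/2}^{1/2}\widehat f(\xi)e^{2\pi i k\xi}\,d\xi$, which is exactly the right-hand side of the theorem.
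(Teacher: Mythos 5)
Your proposal is correct in substance but takes a genuinely different route from the paper. The paper's proof is a three-line affair: substitute the series of Theorem~\ref{Theorem 2} for $\binom{w}{x}$, interchange $\sum_k$ with $\int_{\R}$ on the grounds of ``the absolute convergence of the series,'' and apply (\ref{Plancherel}) term by term. That justification is thinner than yours: Theorem~\ref{Theorem 2} gives absolute convergence of $\sum_k\binom{w}{k}\sinc(x-k)$ \emph{for each fixed $x$}, which by itself does not license swapping the sum with an integral over all of $\R$; making the paper's interchange rigorous for $-1<\mathfrak{R}(w)\le 0$ requires an extra Tonelli-type estimate (for $f\in L^1$ one can check $\sum_k\bigl|\binom{w}{k}\bigr|\int|\sinc(x-k)f(x)|\,dx\le C_w\|f\|_1$ using $\bigl|\binom{w}{k}\bigr|=O(k^{-\mathfrak{R}(w)-1})$) that the paper does not supply. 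You instead isolate the easy regime $\mathfrak{R}(w)>0$, where $\sum_k\bigl|\binom{w}{k}\bigr|<\infty$ settles everything, and handle the delicate strip $-1<\mathfrak{R}(w)\le0$ entirely on the Fourier side via the identity $\widehat{\binom{w}{\cdot}}=(1+e^{-2\pi i\xi})^w\rect(\xi)$, the multiplication formula, and M.~Riesz's theorem on $L^p$-convergence of Fourier partial sums. What this buys is a proof that works exactly where the direct domination is unavailable, at the cost of heavier machinery.

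Two caveats. First, you obtain the Fourier transform of $\binom{w}{\cdot}$ by transforming the series of Theorem~\ref{Theorem 2} term by term, which is the very interchange under scrutiny; you should instead take this identity from Ramanujan's integral as quoted in the paper's proof of Theorem~\ref{Theorem 2} (valid for $\mathfrak{R}(w)>0$ and extended by analyticity, with the support statement following since $(2\cos\pi\xi)^{\mathfrak{R}(w)}$ is integrable on $(-\tfrac12,\tfrac12)$ for $\mathfrak{R}(w)>-1$). Second, your $L^2$ branch genuinely covers only $\mathfrak{R}(w)>-\tfrac12$, so the literal statement remains unproved for $f\in L^2\setminus L^1$ with $-1<\mathfrak{R}(w)\le-\tfrac12$; you are right that in that range $\binom{w}{\cdot}\notin L^2$ and the left-hand integral need not converge absolutely, so this is arguably a defect of the theorem as stated (one the paper's own proof silently inherits) rather than of your argument, but it should be stated as a hypothesis or the integral reinterpreted as an improper one.
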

\begin{corollary}\label{Corollary}If $g$ is bandlimited with bandwidth $\frac{1}{2}$, then

$$\displaystyle \int_{-\infty}^{\infty}\! \binom w x g(x) \,dx= \sum_{k=0}^{\infty}\binom w k g(k), \thickspace \thickspace\thickspace\mathfrak{R}(w)>-1.$$

\end{corollary}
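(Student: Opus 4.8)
The plan is to derive the Corollary as a direct consequence of Theorem~\ref{Theorem 3}. The key observation is that a function $g$ which is bandlimited with bandwidth $\tfrac12$ has its Fourier transform $\widehat g$ supported in $[-\tfrac12,\tfrac12]$, so the inner integral $\int_{-1/2}^{1/2}\widehat g(\xi)e^{2\pi i\xi k}\,d\xi$ in Theorem~\ref{Theorem 3} is in fact the integral of $\widehat g(\xi)e^{2\pi i\xi k}$ over all of $\R$. By the Fourier inversion theorem, that integral equals $g(k)$. Substituting $g(k)$ for the inner integral in the right-hand side of Theorem~\ref{Theorem 3} immediately produces the claimed identity.

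Concretely, the steps are: first, note that $g$ being bandlimited with bandwidth $\tfrac12$ means $\operatorname{supp}\widehat g\subseteq[-\tfrac12,\tfrac12]$; second, observe that $g$ bandlimited and in $L^1(\R)$ or $L^2(\R)$ puts $g$ in the hypothesis class of Theorem~\ref{Theorem 3}, so that theorem applies and gives $\int_{-\infty}^\infty\binom w x g(x)\,dx=\sum_{k=0}^\infty\binom w k\int_{-1/2}^{1/2}\widehat g(\xi)e^{2\pi i\xi k}\,d\xi$ for $\mathfrak R(w)>-1$; third, extend each inner integral to $\R$ using the support condition, so that $\int_{-1/2}^{1/2}\widehat g(\xi)e^{2\pi i\xi k}\,d\xi=\int_{-\infty}^\infty\widehat g(\xi)e^{2\pi i\xi k}\,d\xi$; fourth, apply Fourier inversion at the point $x=k$ to identify this with $g(k)$. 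One could alternatively bypass Theorem~\ref{Theorem 3} and argue directly from \eqref{Plancherel}: since $\widehat g$ vanishes outside $[-\tfrac12,\tfrac12]$, formula \eqref{Plancherel} gives $\int_{-\infty}^\infty g(x)\sinc(x-k)\,dx=g(k)$, and then integrating the series of Theorem~\ref{Theorem 2} against $g$ term by term yields the result; but routing through Theorem~\ref{Theorem 3} is cleanest.

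The only genuine subtlety — the ``main obstacle'' — is the validity of the Fourier inversion step, i.e.\ ensuring that $g(k)=\int_{-\infty}^\infty\widehat g(\xi)e^{2\pi i\xi k}\,d\xi$ holds as a genuine pointwise equality rather than merely an $L^2$ or almost-everywhere statement. This is where the bandlimited hypothesis does real work: a bandlimited $L^2$ function has compactly supported Fourier transform, hence $\widehat g\in L^1(\R)$ as well (compact support plus $L^2$ gives $L^1$ by Cauchy--Schwarz), so the inversion integral converges absolutely and, after modification on a null set, represents the continuous (indeed entire, by Paley--Wiener) version of $g$; evaluating at the integer $k$ is then unambiguous. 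I would state this observation explicitly so that $g(k)$ is well-defined. With that in hand the rest is a one-line substitution, and the restriction $\mathfrak R(w)>-1$ is simply inherited verbatim from Theorem~\ref{Theorem 3}.
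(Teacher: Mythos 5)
Your proposal is correct and follows essentially the same route as the paper: apply Theorem~\ref{Theorem 3} and identify the inner integral $\int_{-1/2}^{1/2}\widehat g(\xi)e^{2\pi i\xi k}\,d\xi$ with $g(k)$ via Fourier inversion, using the support condition on $\widehat g$. Your extra remark that the bandlimited hypothesis makes the inversion hold pointwise (since $\widehat g$ is compactly supported, hence in $L^1$) is a worthwhile refinement the paper leaves implicit, but it does not change the argument.
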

\begin{remark}\rm In {\cite{Ram}}, it is proved that the binomial coefficient $\binom w x$ has bandwidth $\frac{1}{2}$; thus, the last corollary allows one to compute several integrals involving the binomial coefficient (see {\cite[Theorems 4, 6--8]{Sal}} for examples).
\end{remark}

\section{Proofs.}

\begin{proof}[Proof of Theorem \ref{Theorem 1}]
After setting $w=m$, where $m$ is a nonnegative integer and $z \in \C$, $z \notin \Z_{\geq0}$, we can consider multiplying ($\ref{GammaDef}$) by $\frac{ \Gamma (-z)}{\Gamma(-z)}$ and then applying Euler's reflection formula. We get the following equation:
\begin{align*}
  \binom{m}{z} 
&=\frac {\Gamma(m+1)}{\Gamma(z+1) \Gamma(m-z+1)}\frac{ \Gamma (-z)}{\Gamma(-z)}\\&=-
\frac {m!}{\Gamma(z)\Gamma(1-z)}\frac{ \Gamma(-z)}{\Gamma(m-z+1)}=\frac {(-1)^{m}m!\sin(\pi z)}{\pi \prod_{k=0}^{m}(z-k)}.
\end{align*}
By partial fraction decomposition it can be shown that
$$ \frac {(-1)^{m}m!\sin(\pi z)}{\pi \prod_{k=0}^{m}(z-k)}= \frac {\sin(\pi z)}{\pi}\sum_{k=0}^{m}\binom m k \frac {(-1)^k}{z-k}.$$
Since $\cos(\pi k)=(-1)^k$ and $\sin(\pi k)=0$, we may rewrite the right-hand side as
$$\sum_{k=0}^{m}\binom m k \frac {\cos(\pi k)\sin(\pi z)-\sin(\pi k)\cos(\pi z)}{\pi(z-k)}=\sum_{k=0}^{m}\binom m k \sinc(z-k).$$
Furthermore, we can extend the equivalence to the nonnegative integers since \linebreak[4]$\sinc(z-k)$ is $0$ on the integers except for $z=k$ and, hence, for $z\in\Z_{\geq0}$, the sum reduces to $\binom m z$.
\end{proof}

\begin{proof}[Proof of Theorem \ref{Theorem 2}]
Consider the following series of complex functions:
$$\sum_{k=0}^\infty \binom w k  \sinc(z-k).$$
To justify the interchange of sum and integral that we are going to carry out later, we need to establish the absolute convergence of the series. 

Raabe's test (see, for instance, {\cite[p. 39]{Brom}}) states that a series of positive terms $a_k$ converges if
$$ \lim_{k\to\infty} k\left(\frac{a_k}{a_{k+1}}-1\right)>1.$$ 
Setting $a_k=\|\binom w k \sinc(z-k)\|$, we get
$$\frac{a_k}{a_{k+1}}=\left\| \frac{\Gamma(w-k) (k+1)!}{\Gamma(w-k+1)k!} \frac{(z-k-1)}{(z - k)}\right\|=(k+1)\left\| \frac{z-k-1}{(w-k)(z - k)}\right\|.$$
Since $\|a+ib-k\|= \|a-k\| \sqrt{1+\frac{b^2}{(a-k)^2}}$, the limit becomes
$$\lim_{k\to\infty} k\left[\frac{(k+1)(k+1-\mathfrak{R}(z))}{(k-\mathfrak{R}(w))(k-\mathfrak{R}(z))}-1\right]=2+\mathfrak{R}(w).$$ 
It follows that the series is absolutely convergent for $\mathfrak{R}(w)>-1$.

Let us calculate the Fourier transform of the series in Theorem \ref{Theorem 2}: 
$$\sum_{k=0}^{\infty}\binom w k \int_{-\infty}^{\infty}\!\sinc(x-k)e^{- 2\pi i \xi x}\,dx=\rect\left( \xi\right )\sum_{k=0}^{\infty}{\binom {w} k e^{-2\pi i\xi k}}.$$
Using (\ref{BinTheor}), for $\mathfrak{R}(w)>0$ or $w=0$ , we can reduce the last expression to
\begin{equation}\label{FourierT}(1+e^{-2\pi i\xi})^{w}\rect\left( \xi\right ).
\end{equation}
In {\cite[p. 294]{Ram}}, it is proved that, for $\eta \in \R$,
\begin{align*}
&\int_{-\infty}^{\infty} \frac{e^{i\eta x}}{\Gamma(\alpha+x)\Gamma(\beta-x)}\,dx \\
&\qquad = 
\begin{cases}
     \displaystyle \frac{\left[2\cos(\frac{1}{2}\eta)\right]^{\alpha+\beta-2}}{\Gamma(\alpha+\beta-1)} e^{\frac{1}{2}i\eta(\beta-\alpha)}, 
      		& \text{if  $\mathfrak{R}(\alpha+\beta)>1$ and $|\eta|{< \pi}$;} \\
       0, & \text{if  $\mathfrak{R}(\alpha+\beta)>2$ and $|\eta|{\geq \pi}$}.                
  \end{cases}
\end{align*}
For $\alpha=1$, $\beta=w+1$, $\mathfrak{R}(w)>0$ or $w=0$ and $\eta=-2\pi \xi$, this integral can be rewritten as
\begin{align*}
\displaystyle\int_{-\infty}^{\infty} \binom w x e^{-2\pi i\xi x}\,dx &=2^w \cos^w\left(\pi \xi \right)e^{-\pi i\xi w}\rect\left( \xi\right)\\
&=2^w \left(\frac{e^{\pi i\xi}+ e^{-\pi i\xi}}{2}\right)^w e^{-\pi i\xi w}\rect\left( \xi\right) \\
&=(1+e^{-2\pi i\xi})^{w}\rect\left( \xi\right ).
\end{align*}
Thus, by the Fourier inversion theorem, the series in Theorem \ref{Theorem 2} coincides with (\ref{GammaDef}) for $\mathfrak{R}(w)>0$ or $w=0$. Furthermore, since both formulas are analytic for $\mathfrak{R}(w)>-1$,  we can extend the equivalence to the region of convergence of the series, i.e., for $\mathfrak{R}(w)>-1$. 
\end{proof}

\begin{proof}[Proof of Theorem \ref{Theorem 3}]
By Theorem $\ref{Theorem 2}$, we may write
$$\displaystyle \int_{-\infty}^{\infty} \binom w x f(x)\,dx
= \displaystyle\int_{-\infty}^{\infty}\sum_{k=0}^{\infty} \binom w k \sinc(x-k)\,f(x)\,dx.$$\\
The supposition that $f$ is in either $L^1(\R)$ or $L^2(\R)$, together with Theorem \ref{Theorem 2}, implies the absolute convergence of the series and hence allows to interchange the sum and integral:
$$\sum_{k=0}^{\infty} \binom w k \displaystyle\int_{-\infty}^{\infty}f(x)\,\sinc(x-k)\,dx=\sum_{k=0}^{\infty} \binom w k \displaystyle\int_{-\frac{1}{2}}^{\frac{1}{2}}\widehat f(\xi) e^{2\pi i\xi k}\,d\xi.$$
The last step follows from (\ref{Plancherel}).
\end{proof}

\begin{proof}[Proof of Corollary \ref{Corollary}]
By definition, any bandlimited function $g$ with bandwidth $\frac{1}{2}$ can be written as
$$\displaystyle\int_{-\frac{1}{2}}^{\frac{1}{2}}\widehat g(\xi) e^{2\pi i\xi x}\,d\xi.$$
It follows from Theorem $\ref{Theorem 3}$ that
$$\int_{-\infty}^{\infty}\! \binom w x g(x) \,dx=\sum_{k=0}^{\infty}\binom w k \displaystyle \int_{-\frac{1}{2}}^{\frac{1}{2}}\! \widehat g(\xi) e^{2\pi i\xi k} \,d\xi=\sum_{k=0}^{\infty}\binom w k g(k).$$

\end{proof}

\section{Secondary Results.}
The binomial coefficient $\binom w z$ has antiderivative
\begin{equation}\label{Application1}\frac{1}{\pi}\sum_{k=0}^\infty \binom w k  \Si(\pi z -\pi k), \thickspace \thickspace\thickspace\mathfrak{R}(w)>-1.
\end{equation}
\begin{proof}By the definition of $\Si(z)$, we can write
$$\frac{1}{\pi}\sum_{k=0}^\infty \binom w k  \Si(\pi z -\pi k) =\frac{1}{\pi} \sum_{k=0}^\infty \binom w k \int_{0}^{\pi z-\pi k} \frac{\sin t}{t} \,dt.$$
Letting $t=\pi (u-k)$, the right-hand side becomes
$$ \sum_{k=0}^\infty \binom w k \int_{k}^{z}\sinc(u-k)\,du= \int_{k}^{z} \sum_{k=0}^\infty \binom w k \sinc(u-k) \,du.$$
Therefore, to end the proof, it is sufficient to apply Theorem \ref{Theorem 2} to show that the derivative of the last expression coincides with the binomial coefficient $\binom w z$ for $\mathfrak{R}(w)>-1$. Note that the interchange of the sum and integral above is justified by the absolute convergence of the series.
\end{proof}

In addition, Theorem 2 gives the following absolutely convergent series:
\begin{equation}\label{Application2}
\sum_{k=0}^\infty \binom w k \frac{\cos(\pi z-\pi k)}{\pi z-\pi k}= \binom w z \cot(\pi z),  \thickspace \thickspace\thickspace z\in\C,\thickspace z \notin \Z,\thickspace \mathfrak{R}(w)>-1.
\end{equation}
\begin{equation}\label{Application3}
\sum_{k=0}^\infty \binom w k \binom k z \binom z k= \binom w z,  \thickspace \thickspace\thickspace z\in\C, \thickspace\mathfrak{R}(w)>-1.
\end{equation}
\begin{proof}[Proof of (\ref{Application2})]
\begin{align*}\sum_{k=0}^\infty \binom w k \frac{\cos(\pi z-\pi k)}{\pi z-\pi k}&= \frac{\cos(\pi z)}{\pi}\sum_{k=0}^\infty \binom w k \frac{(-1)^k}{z-k}\\
&=\frac{\cos(\pi z)}{\pi} \left[ \frac{\pi}{\sin(\pi z)} \binom w z\right]= \binom w z \cot(\pi z).
\end{align*}
The convergence follows directly from Theorem $\ref{Theorem 2}$.
\end{proof}
\begin{proof}[Proof of (\ref{Application3})] 
Writing the product $\binom k z  \binom z k$  using the  gamma  function, and applying Euler's reflection formula, one obtains
\begin{eqnarray*}    \binom k z \binom z k &=&  \frac {1}{\Gamma(k-z+1)\Gamma(z-k+1)}=\frac {1}{\Gamma(k-z+1)\Gamma(z-k)(z-k)}\\
 &=& \frac {\sin(\pi z-\pi k)}{\pi} \frac{1}{(z-k)}=  \sinc(z-k).
\end{eqnarray*}
 The  claim thus follows  from Theorem \ref{Theorem 2}.
\end{proof}

With the help of Theorems \ref{Theorem 2} and \ref{Theorem 3}, we can find functions whose integrals on $\R$ look almost identical to the series of their values on the integers. For instance, the following two integrals are valid for $\mathfrak{R}(w)>-1$.
\begin{equation}\label{Application4}\displaystyle \int_{-\infty}^{\infty} \binom w x \frac{1}{x+\alpha}\,dx=\sum_{k=0}^\infty \binom w k \frac{1}{k+\alpha}-e^{i\pi \alpha}B(w+1,\alpha), 
\end{equation}
where $\mathfrak{I}(\alpha) >0$ or $\alpha \in \Z_ {\geq 1}$.
\begin{equation}\label{Application5}\displaystyle \int_{-\infty}^{\infty} \binom w x \frac{1}{x^2+\alpha^2}\,dx= \sum_{k=0}^\infty \binom w k \frac{1}{k^2+\alpha^2}-\frac{\pi}{\alpha(e^{2\pi\alpha}-1)}\left[\binom w {i\alpha} + \binom w {-i\alpha} \right],
\end{equation}
where $\mathfrak{R}(\alpha)>0$.

\begin{proof}[Proof of (\ref{Application4})] By Theorem \ref{Theorem 2}, for $\mathfrak{R}(w)>-1$, we can write
\[
\int_{-\infty}^{\infty} \binom w x \frac{1}{x+\alpha}\,dx =\frac{1}{\pi} \displaystyle \int_{-\infty}^{\infty}\! \sum_{k=0}^\infty \binom w k \frac{\sin(\pi x -\pi k)}{(x-k)(x+\alpha)}\,dx.
\]
Setting $\mathfrak{I}(\alpha) >0$ or $\alpha \in \Z_ {\geq 1}$ implies the convergence of the integral. Therefore the series is absolutely convergent and we can interchange the sum and integral:
\begin{align*}
\frac{1}{\pi} \sum_{k=0}^\infty &\binom w k  \int_{-\infty}^{\infty}\!\frac{\sin(\pi x -\pi k)}{(x-k)(x+\alpha)}\,dx \\
&=\frac{1}{\pi} \sum_{k=0}^\infty \binom w k \frac{(-1)^k}{k+\alpha} \left[ \int_{-\infty}^{\infty}\!\frac{\sin(\pi x)}{x-k}\,dx -  \int_{-\infty}^{\infty}\!\frac{\sin(\pi x)}{x+\alpha}\,dx\right]\\
&= \frac{1}{\pi} \sum_{k=0}^\infty \binom w k \frac{(-1)^k}{k+\alpha} \left[ (-1)^k \pi - \pi e^{i\pi\alpha}\right]\\
&=\sum_{k=0}^\infty \binom w k \frac{1}{k+\alpha}-e^{i\pi \alpha}\sum_{k=0}^\infty \binom w k \frac{(-1)^k}{k+\alpha}\\
&=\sum_{k=0}^\infty \binom w k \frac{1}{k+\alpha}-e^{i\pi \alpha}B(w+1,\alpha).
\end{align*}
The last step follows from (\ref{BetaFunc}).
\end{proof}

\begin{proof}[Proof of (\ref{Application5})]
Setting $\mathfrak{R}(\alpha)>0$ implies the absolute convergence of the integral and allows one to use Theorem \ref{Theorem 3}:
\begin{align*}
\displaystyle \int_{-\infty}^{\infty} \binom w x \frac{1}{x^2+\alpha^2}\,dx&= \frac{\pi}{\alpha}\sum_{k=0}^\infty \binom w k \displaystyle \int_{-\frac{1}{2}}^{\frac{1}{2}}\!e^{-2\pi \alpha |\xi|}e^{2\pi i \xi k}\,d\xi\\
&=\sum_{k=0}^\infty \binom w k \frac{1-e^{-\pi \alpha}(-1)^k}{k^2+\alpha^2}\\&= \sum_{k=0}^\infty \binom w k \frac{1}{k^2+\alpha^2}-\frac{\pi }{\alpha(e^{2\pi \alpha}-1)}\left[\binom w {i\alpha} + \binom w {-i\alpha} \right].
\end{align*}
The last step can be derived from Theorem $\ref{Theorem 2}$.
\end{proof}
Moreover, a surprising integral can be found with Theorem \ref{Theorem 3}:
\begin{equation}\label{Application6}
\displaystyle \int_{-\infty}^{\infty} \binom {i\alpha}{x} \sech\left(\frac{\pi x}{\alpha}\right) dx=\frac{\alpha 2^{i\alpha}}{\sqrt{\pi}}\frac{\Gamma(\frac{i\alpha}{2}+\frac{1}{2})}{\Gamma(\frac{i \alpha}{2}+1)},
\end{equation}
where $\mathfrak{R}(\alpha)>0$ and $\mathfrak{I}(\alpha)>-1$.
\begin{proof} 
Assuming $\mathfrak{R}(\alpha)>0$ and $\mathfrak{I}(\alpha)>-1$, we can use Theorem \ref{Theorem 3} to write
$$\displaystyle \int_{-\infty}^{\infty} \binom {i\alpha}{x} \sech\left(\frac{\pi x}{\alpha}\right) dx = \alpha \sum_{k=0}^{\infty}\binom {i\alpha} k  \displaystyle \int_{-\frac{1}{2}}^{\frac{1}{2}}\! \sech \left( \pi \alpha \xi\right) e^{2\pi i \xi k}\,d\xi.$$
Using Euler's formula to exploit the symmetries of the trigonometric functions and the absolute convergence of the integral, we can rewrite the last expression as
 $$2 \alpha  \int_{0}^{\frac{1}{2}}\!  \sum_{k=0}^{\infty}\binom {i\alpha} k \cos(2\pi \xi k )\, \sech ( \pi \alpha \xi) \,d\xi = \alpha 2^{i\alpha+1} \int_{0}^{\frac{1}{2}}\!  \cos^{i\alpha}(\pi\xi)\, d\xi.  $$
Then, upon setting $\sin^2(\pi\xi)=t$, we get 
$$\frac{\alpha 2^{i\alpha}}{\pi} \int_{0}^{1} t^{-1/2}(1-t)^{\frac{i\alpha}{2}-\frac{1}{2}}\,dt=\frac{\alpha 2^{i\alpha}}{\sqrt{\pi}}\frac{\Gamma(\frac{i\alpha}{2}+\frac{1}{2})}{\Gamma(\frac{i \alpha}{2}+1)}.$$
The last step follows from (\ref{BetaFunc2}).
\end{proof}

The last application that we are going to give in the article is an integral representation of the complex binomial coefficient:
\begin{equation}\label{Application7}
\displaystyle \int_{-\infty}^{\infty} \binom w x \sinc(x-z)\, dx = \binom w z, \thickspace \thickspace\thickspace z \in \C,\thickspace \mathfrak{R}(w)>-1.
\end{equation}
\begin{proof}
As follows from Corollary $\ref{Corollary}$, since $\sinc(x)$ is bandlimited to $[-\frac{1}{2},\frac{1}{2}]$, we can write
$$
\displaystyle \int_{-\infty}^{\infty} \binom w x \sinc(x-z) \,dx = \sum_{k=0}^\infty \binom w k  \sinc(k-z)=\binom w z.
$$
\end{proof}
\begin{acknowledgment}\rm The author wishes to thank Francesca Aicardi for her helpful tips on the presentation.
\end{acknowledgment}

lorenzodavid@outlook.it
\vfill\eject

\end{document}